\def\rr{{\mathbb R}}
\def\fz{\infty}
\def\dist{{\mathop\mathrm{\,dist\,}}}
\def\loc{{\mathop\mathrm{\,loc\,}}}
\def\lz{\lambda}
\def\dz{\delta}
\def\ez{\epsilon}
\def\bz{\beta}
\def\gz{{\gamma}}
\def\bint{{\ifinner\rlap{\bf\kern.25em--}
\int\else\rlap{\bf\kern.45em--}\int\fi}\ignorespaces}
\def\bbint{{\ifinner\rlap{\bf\kern.35em--}
\hspace{0.078cm}\int\else\rlap{\bf\kern.45em--}\int\fi}\ignorespaces}
\def\diam{{\mathop\mathrm{\,diam\,}}}
\def\subsetneq{{\hspace{0.2cm}\stackrel
\subset{\scriptstyle\ne}\hspace{0.15cm}}}
\newtheorem{thm}{Theorem}[section]
\newtheorem{prop}[thm]{Proposition}%[section]    %@@!!@@!!
\numberwithin{equation}{section}
\theoremstyle{remark}
\def\bint{{\ifinner\rlap{\bf\kern.35em--}
\int\else\rlap{\bf\kern.45em--}\int\fi}\ignorespaces}
\title{$C^{1}$-regularity of planar $\infty$-harmonic functions---Revisit }
\author{ Yi Ru-Ya Zhang and Yuan Zhou}
\address{Y. Zhang: Hausdorff Center for Mathematics, Endenicher Allee 62, Bonn 53115, Germany}
\email{yizhang@math.uni-bonn.de}
\address{Y. Zhou: Department of Mathematics, Beihang University, Beijing 100191, P.R. China}
\email{yuanzhou@buaa.edu.cn}
\date{\today}
\begin{document}

 \allowdisplaybreaks
\arraycolsep=1pt
\maketitle
\begin{center}
\begin{minipage}{13cm}
{\bf Abstract.}
   In the seminal paper  [Arch. Ration. Mech. Anal. 176 (2005), 351--361],
  Savin  proved the $C^1$-regularity of planar $\infty$-harmonic functions $u$. Here we give a new understanding of it from a capacity viewpoint and drop several high  technique arguments therein. Our argument  is essentially based on a topological lemma of Savin, a flat estimate by Evans and Smart, % \cite{es11a},
  $W^{1,2}_\loc$-regularity   of  $|Du|$ and Crandall's  flow for infinity harmonic functions.
\end{minipage}
\end{center}
%\tableofcontents
%\contentsline{section}{\numberline{ } References}{36}

\section{Introduction}

Let $n\ge2$ and $\Omega\subset \mathbb R^n$ be a domain (an open connected subset).
 A  function $u\in C^0(\Omega)$  is  called $\infty$-harmonic   in $\Omega$ if
\begin{equation}\label{infty equ}
-\Delta_{\infty} u := -\sum_{i,j=1}^nu_{x_i}u_{x_j}u_{x_ix_j}   =0 \quad {\rm in}\ \Omega
\end{equation}
in   viscosity sense; see \cite{j1993}. The existence and uniqueness of  $\infty$-harmonic functions has been established by Jessen in \cite{j1993}. Their regularity is  the main issue in this field.

When $n=2$,  based on the planar topology, the linear approximation property by Crandall-Evans \cite{ce},
 and the comparison property with cones by   Crandall-Evans-Gareipy \cite{ceg},
 in the seminal paper \cite{s05} Savin proved that
 \begin{thm}\label{sa} If
  $u $ is an $\infty$-harmonic function in a domain $\Omega\subset\rr^2$,
    then $u\in C^1(\Omega)$.
\end{thm}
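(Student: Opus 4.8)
The plan is to establish $C^1$-regularity by showing that at every point $x_0\in\Omega$ the gradient $Du$ has a limit as we approach $x_0$, and that this convergence is locally uniform. Since $u$ is $\infty$-harmonic it is in particular locally Lipschitz (hence differentiable almost everywhere by Rademacher), and by \cite{ce} it enjoys the linear approximation property: at every point there exists a blow-up that is an affine function $\ell_{x_0}(x)=a_{x_0}+p_{x_0}\cdot(x-x_0)$, with $|p_{x_0}|$ equal to the ``gradient magnitude'' of $u$ at $x_0$ (which by \cite{ceg} is well-defined via comparison with cones and is upper semicontinuous). The heart of the matter is to promote this weak differentiability-type statement to genuine $C^1$ behavior: one must rule out that the blow-up slope $p_{x_0}$ jumps, and one must control the rate at which $u$ is approximated by $\ell_{x_0}$ on small balls, uniformly in $x_0$.

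First I would reduce to the non-degenerate case: if $|p_{x_0}|=0$ at some point, a short argument using comparison with cones forces $u$ to be locally constant (or $|Du|$ to be small nearby), so $C^1$ holds trivially there; hence assume $|p_{x_0}|>0$ on the region of interest, and after normalizing take $|p_{x_0}|=1$. Next, I would invoke the flat estimate of Evans--Smart: in two dimensions, if $u$ is $\infty$-harmonic and close to a linear function in $B_r$, then it is even closer (with a quantitative improvement) to a linear function in $B_{r/2}$ — morally a ``one-sided'' decay of oscillation of $u-\ell$. Iterating such a flatness-improvement across dyadic scales is the classical route to $C^1$: it yields that the linearizing slopes $p_{x_0,r}$ at scale $r$ form a Cauchy sequence with a uniform modulus, so $Du(x_0)$ exists, equals $\lim_r p_{x_0,r}$, and the convergence is uniform in $x_0$.

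The step that does the real work — and what replaces Savin's harder arguments — is supplying the rigidity input that makes the flatness iteration close. Here I would bring in the two ingredients the abstract advertises: the $W^{1,2}_{\loc}$-regularity of $|Du|$ (so that $|Du|$ has a well-defined value on $2$-capacity-a.e.\ point and satisfies a Caccioppoli-type energy estimate), and Crandall's gradient flow for $\infty$-harmonic functions (the integral curves along which $u$ grows at unit speed when $|Du|\equiv1$, i.e.\ the ``streamlines''). The strategy is: along a streamline through $x_0$, $u$ behaves like a one-dimensional linear profile; the $W^{1,2}$-energy bound on $|Du|$ prevents the direction of $Du$ from oscillating too wildly transverse to the streamline (this is where the capacity viewpoint enters — a set where the gradient direction fails to converge would have to carry positive capacity, contradicting the energy bound); and Savin's topological lemma confines the geometry of the level sets in the plane so that these streamlines cannot spiral or cross in a way that would destroy the flatness. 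Combining: on a small ball around $x_0$ the gradient directions are trapped in a shrinking cone, which upgrades the Evans--Smart flat estimate to the full dyadic decay.

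The main obstacle I anticipate is precisely the interface between the measure-theoretic/capacity control of $|Du|$ and the pointwise-everywhere statement needed for $C^1$: the energy bound and Crandall's flow give information on $2$-capacity-almost every streamline, but $C^1$ demands control at \emph{every} point $x_0$, with a uniform modulus. Bridging this gap is where Savin's topological lemma must be used essentially — it is the tool that propagates the good behavior from a capacity-large set of streamlines to all of them by a connectedness/ordering argument special to the plane. A secondary technical point is making the flatness iteration quantitative and scale-invariant: one must check that the Evans--Smart estimate, the Caccioppoli estimate for $|Du|$, and the streamline argument all have the correct homogeneity so that the constants do not degrade through infinitely many dyadic steps. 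I would handle this by working throughout with the normalization $|p_{x_0}|=1$ and rescaling $u_r(x)=u(x_0+rx)/r$ at each scale, so that every estimate is applied to a uniformly controlled family.
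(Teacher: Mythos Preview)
Your outline misidentifies what each of the three ingredients actually says, and as a result the mechanism you propose does not close. The Evans--Smart flat estimate is \emph{not} a flatness-improvement of the De~Giorgi type (``$u$ close to affine on $B_r$ $\Rightarrow$ closer on $B_{r/2}$''); no such estimate is known for $\infty$-harmonic functions, and having one would already give $C^{1,\alpha}$. What Evans--Smart prove is the pointwise relation $|Du|^2\le u_{x_2}+C\lambda$ under the hypothesis $\sup_{B(0,4)}|u-x_2|\le\lambda$. This does not iterate; rather, it converts a lower bound on $|Du|$ near $0$ into the full statement $Du\approx e_2$. So the whole dyadic scheme you describe cannot start. (Also, the side remark that $|Du(x_0)|=0$ forces $u$ to be locally constant is false---Aronsson's function $x_1^{4/3}-x_2^{4/3}$ already shows this; the correct reduction at such points is simply upper semicontinuity of $|Du|$.)

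The capacity argument in the paper is much more concrete than what you sketch. One does not argue that ``bad'' sets have positive capacity; instead one builds a \emph{condenser}: Crandall's flow gives a curve $\xi$ from $0$ to $\partial B(0,1)$ on which $|Du|\ge 1$, while Savin's topological lemma (this is its actual content) produces, for any small $r$, a continuum $\eta$ joining $\partial B(0,r)$ to $\partial B(0,1)$ on which $|Du|<\mu+9\lambda$, where $\mu=\liminf_{x\to0}|Du(x)|$. If $\mu+9\lambda<1$, then a suitable affine renormalization of $|Du|$ is an admissible test function separating $\xi$ and $\eta$, so its Dirichlet energy dominates ${\rm Cap}(\xi,\eta,B(0,2))\gtrsim\log(1/r)$; but the $W^{1,2}_{\loc}$ bound on $|Du|$ from \cite{kzz} caps that energy by a fixed constant. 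Letting $r\to0$ forces $\mu\ge 1-9\lambda$, i.e.\ continuity of $|Du|$ at $0$, and then the Evans--Smart relation $|Du|^2\le u_{x_2}+C\lambda$ upgrades this to $|Du-e_2|\le C\lambda^{1/2}$. Your proposal has the right list of ingredients but not this architecture; in particular, Savin's lemma is used to manufacture the small-gradient plate of the condenser, not to ``confine streamlines''.
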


However Savin's  approach  heavily depends on the planar topology, which makes it difficult to generalize to the higher dimension.

On the other hand,  via specific PDE approach (and hence completely different from  Savin' approach),
for any $n\ge2$ Evans-Smart \cite{es11a,es11b} established the everywhere differentiability of $\infty$-harmonic functions $u$ in $\mathbb R^n$.
Indeed, they approximated $u$ in $C^0$ via $e^{\frac1{2\ez}|p|^2}$-harmonic functions  $u^\ez$,
 and built up certain flatness estimate for $u^\ez$; see Lemma~\ref{keyp1} for a version of it. From this
 and  the linear approximation property they resulted the  everywhere differentiability of $u$.
Recently in the plane, Koch-Zhang-Zhou \cite{kzz} further obtained
a quantative $W^{1,2}_\loc$-regularity of $|Du|$
by building up a structural identity for $e^{\frac1{2\ez}|p|^2}$-harmonic equation, and then showing uniform $W^{1,2}_\loc$-regularity of $|Du^\ez|$ and the  Sobolev  convergence of  $u^\ez\to u$.

In this paper, we give a new viewpoint of  Savin's  $C^1$-regularity proof via a capacity argument.  This allows us to skip certain high technique arguments  in his original proof.  The key point is to show the continuity of $|Du|$ when $|Du|\neq 0$ via the
 $W^{1,2}_\loc$-regularity of $|Du|$ by Koch-Zhang-Zhou \cite{kzz}, the existence of a curve with large $|\nabla u|$ by Crandall \cite{c08} and the
  existence of  a continuum with small  $|\nabla u|$ in the original paper of Savin \cite{s05}; indeed one directly concludes the logarithmic moduli of continuity of $|Du|$ when $|Du|\neq 0$ with this method. Then combining with  a flatness estimate of $u$ by
Evans-Smart \cite{es11a}, we obtain the continuity of $Du$ when  $|Du|\neq 0$. The continuity of $Du$ at $\{Du=0\}$ is a direct consequence of the upper semi-continuity of $|Du|$ at differentiable points of $u$ \cite{ceg} and  the everywhere differentiability of $\infty$-harmonic functions \cite{es11a,es11b}.

We end the introduction by recalling the following conjecture; see \cite{kzz} for details.

 \noindent{\bf Conjecture.} Let $u $ be an $\infty$-harmonic function in a domain $\Omega\subset\rr^2$.
Then $|Du|\in W^{1,p}_\loc(\Omega)$ for some $p>2$.

If this conjecture were true, then one would directly conclude the continuity of $|Du|$. This together with the flat estimate of Evans-Smart would also imply Proposition~\ref{keyp}, and then  the continuity of $Du$.

\section{Proof of Theorem 1.1}

Recall that  by Evans-Smart \cite{es11a,es11b},
   a  planar $\fz$-harmonic function $u$ is differentiable everywhere and every point is a Lebesgue point of $Du$.
 Theorem 1.1 then is a direct consequence of the following result.
 \begin{prop}\label{keyp} Assume that $u$ is  a planar $\fz$-harmonic function in $B(0,4)$  and satisfies $u(0)=0$ and
 $Du(0)=e_2$.   If  \begin{equation}\label{flatass}\sup_{B(0,4)}|u(x) -x_2|\le \lz \end{equation}
 for some $\lz\in(0,1)$, then $$\limsup_{x\to 0}|Du(x)-e_2|\le 1-C\lz^{1/2},$$
 where $C\ge1$ is  an absolute constant.
 \end{prop}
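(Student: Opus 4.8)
The plan is to exploit the flatness hypothesis~\eqref{flatass} together with the three tools advertised in the introduction: Crandall's flow producing a curve of large gradient, Savin's topological lemma producing a continuum of small gradient, and the $W^{1,2}_\loc$-regularity of $|Du|$ from~\cite{kzz}, in order to rule out the possibility that $|Du|$ stays bounded away from $1$ near the origin. Concretely, since $Du(0)=e_2$ and the flatness constant $\lz$ is small, I expect that in a small ball $B(0,\rho)$ one can always find, via Crandall's gradient flow~\cite{c08} emanating from $0$ (where $|Du(0)|=1$), a continuum $\Gamma_1$ on which $|Du|\ge 1-C\lz^{1/2}$ (roughly, the gradient cannot drop too fast along Crandall's curves because $|Du|$ is monotone along them and $u$ is trapped in the flat slab). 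The first step is to make this quantitative: produce $\Gamma_1\subset B(0,\rho)$, connecting scales, on which $|Du|$ is close to $1$.

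The second step is the capacity/contradiction mechanism. Suppose $\limsup_{x\to 0}|Du(x)-e_2|>1-C\lz^{1/2}$; since $u$ is everywhere differentiable and $|Du|$ is upper semicontinuous at differentiable points~\cite{ceg}, and since the target estimate is really about $|Du|$ being close to $1$ (the direction being handled afterwards via the flat estimate), this forces $|Du|$ to be substantially below $1$ somewhere arbitrarily close to $0$. Then I would invoke the continuum of small $|Du|$ from Savin~\cite{s05}: on appropriate annuli $A(0,r,2r)$ one gets a continuum $\Gamma_2^r$ separating the two boundary circles on which $|Du|\le 1-\eta$ for a fixed $\eta$. Now $\Gamma_1$ (large gradient) and the family $\{\Gamma_2^r\}$ (small gradient) are "linked" in the planar annulus, so the function $|Du|$ must oscillate by at least $\eta$ across a definite proportion of the annuli at all small scales. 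The $W^{1,2}_\loc$-bound $\||Du|\|_{W^{1,2}(B(0,1/2))}\le M$ then caps the number of scales on which such an oscillation can occur: by the standard logarithmic estimate (the integral $\int \frac{dr}{r}$ against a fixed $W^{1,2}$ energy), an oscillation of size $\eta$ across $N$ dyadic annuli costs energy $\gtrsim \eta^2 N$, so $N\lesssim M/\eta^2<\infty$, contradicting the fact that the bad scales accumulate at $0$. This yields $\limsup_{x\to 0}\big||Du(x)|-1\big|\le C\lz^{1/2}$, and in fact the argument gives a logarithmic modulus of continuity for $|Du|$ where it is nonzero.

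The third step upgrades control of $|Du|$ to control of $Du$. Once $|Du(x)|$ is uniformly close to $1$ near $0$ and $|Du|$ is continuous there, I feed this into the Evans–Smart flatness estimate (Lemma~\ref{keyp1} in the paper): the flatness of $u$ together with $Du(0)=e_2$ forces the blow-ups of $u$ at $0$ to be the linear function $x_2$, and combined with $|Du(x)|\to 1$ this pins the direction $Du(x)/|Du(x)|\to e_2$. Quantitatively, tracking the $\lz$-dependence through the flat estimate should give $|Du(x)-e_2|\le 1-C\lz^{1/2}$ in the limsup, with the $\lz^{1/2}$ coming from the square-root loss typical of such flatness-to-gradient comparisons (and matching the exponent in the $\lz^{1/2}$ on the right-hand side).

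**The main obstacle** I anticipate is the second step: correctly setting up the topological linking between Crandall's large-gradient curve and Savin's small-gradient continua so that every dyadic annulus near $0$ genuinely contributes a fixed oscillation of $|Du|$, and then converting that into an honest lower bound on the $W^{1,2}$ energy via a Poincaré-type inequality on annuli. One must be careful that $\Gamma_1$ actually reaches into infinitely many small annuli (this is where Crandall's flow, rather than a single curve, is essential), and that Savin's continua are available at every small scale under the flatness hypothesis. The direction-fixing in the third step is comparatively soft given the Evans–Smart machinery, but keeping the constant $C$ absolute and the exponent $1/2$ sharp throughout will require some care with the chain of estimates.
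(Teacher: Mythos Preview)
Your proposal is essentially the paper's own argument: Crandall's flow for the large-gradient curve, Savin's topological lemma for the small-gradient continuum, the $W^{1,2}_\loc$ bound on $|Du|$ from \cite{kzz} combined with a capacity estimate to force $\mu:=\liminf_{x\to 0}|Du(x)|\ge 1-9\lz$, and then Proposition~\ref{keyp1} (the Evans--Smart flat estimate $|Du|^2\le u_{x_2}+C_0\lz^{1/2}$) to fix the direction via elementary algebra.

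Two small corrections and one simplification. First, Crandall's flow gives $|Du|\ge |Du(0)|=1$ along $\xi$, not merely $1-C\lz^{1/2}$; no $\lz$ enters that step. Second, Savin's continuum $\eta$ \emph{connects} $\partial B(0,r)$ to $\partial B(0,1)$ rather than separating an annulus; both $\xi$ and $\eta$ are ``radial'', so they need not intersect, and the contradiction really does come from the energy/capacity estimate, not from a forced crossing. Finally, the paper replaces your dyadic-annulus oscillation count by a single condenser estimate: since $\dist(\xi,\eta)<r$ while $\min\{\diam\xi,\diam\eta\}\ge 1/2$, one has ${\rm Cap}(\xi,\eta,B(0,2))\ge C_1\ln(1/r)$, and $w=\big(|Du|-(\mu+9\lz)\big)/(1-\mu-9\lz)$ is an admissible competitor, yielding $(1-\mu-9\lz)^2\le C_2/\ln(1/r)\to 0$ as $r\to 0$. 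This is equivalent to your summed Poincar\'e bound over $\sim\log(1/r)$ annuli but avoids the bookkeeping.
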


For reader's convenience we give the details of Theorem 1.1 via Proposition \ref{keyp} as below.

\begin{proof}[Proof of Theorem 1.1]
%Let $\Omega\subset\rr^2$ be a domain and $u\in C^0(\Omega) $  be an $\infty$-harmonic function.
%It is well-known that $u\in C^{0,1}(\Omega)$; see \cite{j1993}.
% By Evans-Smart \cite{es11a,es11b},
%  $u $  is differentiable everywhere in $\Omega$.
We show that $Du$ is continuous at any given point $\bar x\in \Omega$.
 For simplicity, we may assume that $\bar x=0$.
 If $Du(0)=0$, by the upper-semicontinuity of  $|Du|$ (see \cite{ceg}) we immediately obtain the continuity of $Du$  at $0$.
 Assume  that $Du(0)\ne 0$. Up to some suitable scaling and rotation, we may assume that $Du(0)=e_2$.
For any $\lz>0$, by the differentiability of $u$ at $0$, there exists an
 $r_\lz\in(0,\frac12\dist(x,\partial\Omega))$ such that
  $$\sup_{x\in B(0, r_\lz)}\frac1 {   r_\lz}|u(x)-x_2|\le \lz .$$
 Let  $v(x)=u( 4r_\lz x)/ 4r_\lz$ in $B(0,4)$. Then $v$ is $\fz$-harmonic in $B(0,4)$, $v(0)=0$, $Dv(0)=e_2$ and
    $$\sup_{x\in B(0, 4)} |v(x)-x_2|\le 4\lz .$$
    Applying Proposition \ref{keyp} and $Du(r_\lz x)=Dv(x)$ we have
   $$ \lim_{x\to0} |Du(x)-e_2|=\lim_{x\to0} |Dv(x)-e_2|\le  C\lz^{1/2}.$$
   By the arbitrariness of $\lz>0$ we conclude   $  \lim_{x\to0}Du(x)=e_2  $ as desired.
\end{proof}

Below we
  prove Proposition \ref{keyp} with the aid of Propositions \ref{keyp1} and \ref{keyp2}.
Proposition  \ref{keyp1}
is a consequence of the flatness estimate by Evans-Smart \cite{es11a}; some details are given for reader's convenience.
 \begin{prop}\label{keyp1}
 Let $u$ be as in Proposition \ref{keyp}. %Assume that $u$ is  a planar $\fz$-harmonic function in $B(0,4)$  and satisfies $v(0)=0$ and
 %$Dv(0)=e_2$.
 If  \eqref{flatass} holds for some $\lz\in(0,1)$,
 then $$|Du(x)|^2\le u_{x_2}+C_0\lz\quad\mbox{in $B(0,1)$},$$
 where $C_0\ge 1$ is an absolute constant.
 \end{prop}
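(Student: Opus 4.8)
The plan is to derive the pointwise inequality $|Du|^2 \le u_{x_2} + C_0\lambda$ in $B(0,1)$ from the flatness hypothesis \eqref{flatass} by passing through the regularized $e^{\frac{1}{2\epsilon}|p|^2}$-harmonic approximants $u^\epsilon$ of Evans--Smart. First I would recall that since $u$ is $\infty$-harmonic in $B(0,4)$ with the flatness bound $\sup_{B(0,4)}|u(x)-x_2|\le\lambda$, the functions $u^\epsilon$ converge uniformly to $u$ on compact subsets, and by standard elliptic estimates $u^\epsilon \to u$ also in $C^1_{\loc}$ (or at least $Du^\epsilon \to Du$ weakly in $L^2_{\loc}$, which is all that is needed to pass to a limit in an integral/almost-everywhere inequality). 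For $\epsilon$ small the $u^\epsilon$ inherit an approximate flatness bound, say $\sup_{B(0,3)}|u^\epsilon(x)-x_2|\le 2\lambda$.

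The core step is the flatness estimate of Evans--Smart \cite{es11a}: for a solution $w$ of the regularized equation which is $\lambda$-flat in the $e_2$ direction on a ball, one has a bound of the form $|Dw|^2 \le w_{x_2} + C\lambda$ on a smaller concentric ball, where $C$ is absolute and independent of $\epsilon$. I would quote this estimate for $w = u^\epsilon$ on $B(0,3)$, yielding $|Du^\epsilon(x)|^2 \le u^\epsilon_{x_2}(x) + C_0\lambda$ for all $x\in B(0,2)$, with $C_0$ absolute. The key point to check is that the constant in this estimate does not degenerate as $\epsilon\to0$; this is exactly the uniform-in-$\epsilon$ character of the Evans--Smart computation, so it is legitimate to invoke it. Then I would let $\epsilon\to0$: by the $C^1_{\loc}$ (or weak-$L^2_{\loc}$ plus a.e.) convergence $Du^\epsilon\to Du$, the inequality passes to the limit and gives $|Du(x)|^2 \le u_{x_2}(x) + C_0\lambda$ for a.e.\ $x\in B(0,1)$. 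Since $u$ is everywhere differentiable (Evans--Smart \cite{es11a,es11b}), and the inequality holds a.e., a density/semicontinuity argument (using upper semicontinuity of $|Du|$ at differentiable points from \cite{ceg}) upgrades it to hold at every $x\in B(0,1)$.

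I expect the main obstacle to be purely expository rather than mathematical: the Evans--Smart flatness estimate is stated in \cite{es11a} in a somewhat different normalization (different radii, direction, or scaling of $\lambda$), so the work is to extract precisely the clean form $|Du^\epsilon|^2\le u^\epsilon_{x_2}+C\lambda$ with an absolute constant, track the rescaling from $B(0,4)$ down to $B(0,1)$, and verify that the constant is genuinely independent of $\epsilon$. A secondary point is justifying the limit passage at the level of gradients; the simplest route is to use the locally uniform convergence together with the standard gradient bounds for the regularized equation to get $Du^\epsilon\to Du$ locally uniformly (or at least weakly in $L^2_{\loc}$ plus a.e.\ along a subsequence), after which the inequality is stable. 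Finally, replacing "a.e." by "everywhere" is routine given everywhere-differentiability of $u$ and the known upper semicontinuity of $|Du|$; I would just remark on this rather than belabor it.
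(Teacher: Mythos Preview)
Your approach is essentially the paper's: approximate by the Evans--Smart regularized solutions $u^\epsilon$, inherit the flatness, invoke the uniform-in-$\epsilon$ flatness estimate $|Du^\epsilon|^2\le u^\epsilon_{x_2}+C\lambda^{1/2}$ on $B(0,2)$, and pass to the limit. (The paper's proof in fact yields $C\lambda^{1/2}$ rather than $C\lambda$, which is harmless downstream.)

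There is, however, a genuine gap in your limit passage. The option ``$Du^\epsilon\to Du$ in $C^1_{\loc}$'' is not available---that would already be $C^1$-regularity of $u$, which is the whole point of the paper. And your proposed upgrade from a.e.\ to everywhere via upper semicontinuity of $|Du|$ goes the \emph{wrong direction}: upper semicontinuity gives $\limsup_{x\to\bar x}|Du(x)|\le |Du(\bar x)|$, whereas to push the a.e.\ inequality $|Du|^2\le u_{x_2}+C\lambda$ to the exceptional point $\bar x$ you would need the reverse inequality on the left and lower semicontinuity of $u_{x_2}$ on the right, neither of which you have. The paper handles this cleanly by averaging: integrate the $u^\epsilon$-inequality over $B(\bar x,r)$, use weak lower semicontinuity of the $L^2$-norm for $\int|Du^\epsilon|^2$ and weak convergence for $\int u^\epsilon_{x_2}$, obtain $\bint_{B(\bar x,r)}|Du|^2\le\bint_{B(\bar x,r)}u_{x_2}+C\lambda^{1/2}$, and then send $r\to0$ using the Evans--Smart fact that \emph{every} point is a Lebesgue point of $Du$ (together with local boundedness of $|Du|$, so that $\bar x$ is also a Lebesgue point of $|Du|^2$). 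Replace your semicontinuity step by this averaging/Lebesgue-point argument and the proof goes through.
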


 \begin{proof}
 By  \cite{e03,ey04} and \cite{es11b}
for $\epsilon\in(0,1]$  there exists a unique  solution $u^\ez\in C^\infty(B(0,3))\cap C(\overline {B(0,3)})$ to
\begin{equation*} \label{regular infty equ}
 -\Delta_{\infty} u^\ez -\ez \Delta u^\ez=0 \ \text{in $B(0,3)$}, \quad u^\ez=v   \ \text{on $\partial B(0,3)$}
\end{equation*}
so that $u^{\ez}\to u$ in $C^0(\overline{B(0,3)})$   as $\ez\to0$, and
$$\max_{ \overline{V}} |Du^{\ez}|\le C  \dist(V,\,\partial B(0,3)) \quad\forall V\Subset B(0,3).$$
If $\ez>0$ is small enough, we have
 $$\sup_{B(0,3)}|u^\ez (x)-u^\ez(0)-x_2 |\le 2 \lz .$$
By \cite{es11b} and also \cite{es11a,swz}, we further obtain
 $$|Du^\ez|^2\le u^\ez_{x_2}+C\lz^{1/2} \quad\mbox{in $B(0,2)$}. $$
  This implies that
 $$\bint_{B(\bar x,r)}|Du^\ez|^2\,dx \le \bint_{B(\bar x,r)} u^\ez_{x_2}\,dx +C\lz^{1/2} \quad\mbox{$\bar x\in B(0,1)$ and  $r\in(0,1)$}.$$
 Letting $\ez\to0$  and noting $|Du^\ez|\to |Du|$ weakly in $L^2(B(0,1))$ (indeed we even have strong convergence here \cite{kzz}), we conclude that
 $$\bint_{B(\bar x,r)}|Du |^2\,dx \le \bint_{B(\bar x,r)} u _{x_2}\,dx +C\lz^{1/2}\quad\mbox{$\bar x\in B(0,1)$ and  $r\in(0,1)$}.$$
Sending $r\to0$ and recalling that $\bar x$ is a Lebesgue point of $Du$ as given in \cite{es11b},   we eventually get
 $$|Du |^2\le u _{x_2}+C\lz^{1/2} \quad\mbox{in $B(0,1)$}. $$
 \end{proof}

Proposition \ref{keyp2} was proved by  Savin \cite{s05} via a topological argument.
For the convenience of the reader,  we sketch the proof.
  \begin{prop}\label{keyp2}  Let $u$ be as in Proposition \ref{keyp}. %Assume that $u$ is  a planar $\fz$-harmonic function in $B(0,4)$  and satisfies $v(0)=0$ and  $Dv(0)=e_2$.
    If \eqref{flatass} holds for some $\lz\in(0,1)$,
and
  $$\mu:=\liminf_{x\to0} |Du(x)|<1,$$
  then for sufficiently small $r>0$, there is a continuum $\eta$ joining  $ \partial B(0,1)$ and $ \partial B(0,r)  $
  so that
 $$|Du  |<  \mu+9\lz\quad \mbox{in  $\eta$.}$$

 \end{prop}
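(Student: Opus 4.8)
The plan is to follow Savin's topological strategy, which proceeds by contradiction: suppose that for some arbitrarily small $r>0$ no such continuum $\eta$ exists. The starting point is the hypothesis $\mu=\liminf_{x\to 0}|Du(x)|<1$, which guarantees the existence of a sequence $x_k\to 0$ with $|Du(x_k)|\le\mu+\lz$ (say), so the "small gradient" set $\{|Du|<\mu+9\lz\}$ is nonempty in every neighborhood of $0$. On the other hand, by the hypothesis $Du(0)=e_2$ together with the everywhere differentiability and comparison-with-cones, the gradient is close to $e_2$ on a substantial portion of $\partial B(0,1)$ (or at least there exist points on $\partial B(0,1)$ with $|Du|$ close to $1>\mu+9\lz$), so the small-gradient set does not fill an annular neighborhood. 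The heart of the matter is then a connectedness dichotomy in the plane: in the annulus $A_r=B(0,1)\setminus \overline{B(0,r)}$, either a connected component of the closed set $\{x\in \overline{A_r}:|Du(x)|\le \mu+9\lz\}$ joins the two boundary circles (which is exactly the desired $\eta$), or else it is contained in a topological disk and can be "surrounded" by a curve in the complement, i.e.\ a curve in $A_r$ on which $|Du|\ge \mu+9\lz$ that separates $0$ from $\partial B(0,1)$.

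First I would make precise the relevant topological lemma of Savin (a Janiszewski-type / $\theta$-curve separation statement): in a planar annulus, a closed set either contains a continuum joining the two boundary components, or is disjoint from some continuum (in fact a Jordan-type arc) joining the two boundary components of the annulus, lying in the open complement of the closed set. I would apply this with the closed set $F=\{x\in \overline{A_r}: |Du(x)|\le \mu+9\lz\}$ — here continuity of $|Du|$ is \emph{not} yet available, but $|Du|$ is upper semicontinuous at differentiable points (from \cite{ceg}), so $F$ is closed, which is all the topological lemma needs. If $F$ contains a continuum joining $\partial B(0,1)$ and $\partial B(0,r)$, we are done. So assume not; then there is a continuum $\sigma\subset A_r$ joining the two boundary circles along which $|Du|>\mu+9\lz$, and moreover (shrinking $r$ along the subsequence $x_k$) we may arrange that $\sigma$ separates a small-gradient point $x_k$ from $\partial B(0,1)$.

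The contradiction is extracted from this separating curve $\sigma$ of large gradient by running Crandall's infinity-harmonic flow (the gradient flow lines / the curves of maximal increase, cf.\ \cite{c08}) starting from $x_k$: along such a flow line the slope $|Du|$ is monotone in an appropriate sense, and by the comparison-with-cones property the flow line emanating from the low-gradient point $x_k$ cannot cross the barrier $\sigma$ where $|Du|$ is uniformly bigger, because crossing would force $|Du|$ to jump up discontinuously in a way incompatible with the cone comparison inequalities and the flatness bound \eqref{flatass} (which pins the oscillation of $u$, hence of its increment along curves, on the scale $\lz$). Quantitatively, the flatness assumption $\sup_{B(0,4)}|u-x_2|\le\lz$ controls how much $u$ can increase along \emph{any} curve from $x_k$ to $\partial B(0,1)$, while passing through $\sigma$ with slope $>\mu+9\lz$ forces a definite increase; calibrating the constant $9$ against the $\lz$'s yields the contradiction.

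The main obstacle I anticipate is making the "separating curve forces a contradiction" step rigorous without circularity: we do not yet have continuity of $Du$ (that is what the whole paper is building toward), so the argument must use only upper semicontinuity of $|Du|$, everywhere differentiability, and the comparison-with-cones/flow structure. Getting the numerology right — why $\mu+9\lz$ and not some other multiple — will require carefully tracking how $\lz$ enters through \eqref{flatass} in each of: (i) the closeness of $|Du|$ to $1$ on $\partial B(0,1)$, (ii) the increment of $u$ along the flow line, and (iii) the jump across $\sigma$. A secondary technical point is the precise statement and invocation of Savin's topological lemma in the annular setting, and ensuring the separating continuum can be taken to be a nice enough arc that the flow line genuinely must cross it rather than terminate on $\partial B(0,1)$ first; this is handled by choosing $r=r_k$ small enough (depending on $x_k$) that $x_k$ lies in the component of $A_{r_k}\setminus\sigma$ not touching $\partial B(0,1)$.
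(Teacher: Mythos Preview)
Your proposal has two genuine gaps, and it also misidentifies what ``Savin's topological lemma'' actually is in this context.

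First, a semicontinuity slip: upper semicontinuity of $|Du|$ makes $\{|Du|<c\}$ open, hence $\{|Du|\ge c\}$ closed; it does \emph{not} make your set $F=\{|Du|\le \mu+9\lz\}$ closed. This alone is repairable (run the dichotomy on the closed set $\{|Du|\ge \mu+9\lz\}$ instead), but then you land on the real problem. Suppose you do obtain a continuum $\sigma$ with $|Du|\ge \mu+9\lz$ on it, separating $0$ from $\partial B(0,1)$. Your contradiction does not go through. Crandall's flow from a low-gradient point $x_k$ produces a curve on which $|Du|\ge |Du(x_k)|$ --- a \emph{lower} bound --- so nothing forbids the flow line from entering the high-gradient set $\sigma$; the gradient is allowed to increase along the flow. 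The quantitative version you sketch (``flatness bounds the total increase of $u$, while crossing $\sigma$ forces a definite increase'') also fails: the flow line meets $\sigma$ along a set that may be a single point, so crossing contributes no definite increment to $u$, and the flatness bound $|u-x_2|\le\lz$ is perfectly consistent with $|Du|$ being $\ge\mu+9\lz$ on a separating curve. There is no mechanism here that yields a contradiction.

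The paper's argument is completely different in structure: it is \emph{constructive}, not by contradiction, and the ``topological lemma of Savin'' it invokes is not an annular separation dichotomy. One first locates, near $0$, a point $\bar z$ with $|Du(\bar z)|$ close to $\mu$ and (using that $u$ is not linear there) a segment $[\bar x,\bar y]\ni\bar z$ on which $u$ lies above its tangent plane at $\bar z$, strictly at the endpoints. Setting $W=\{y: u(y)>u(\bar z)+Du(\bar z)\cdot(y-\bar z)\}$, Savin's actual topological input is that $\bar x$ and $\bar y$ lie in \emph{different} components $W_{\bar x}\ne W_{\bar y}$ of $W$; this is proved by a planar Jordan-curve argument combined with the comparison principle. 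The flatness assumption then forces a half-plane $\{(e_2-Du(\bar z))\cdot(y-\bar z)>2\lz\}$ to sit inside $W$, so at least one of $W_{\bar x},W_{\bar y}$ is trapped in the thin strip $\{|(e_2-Du(\bar z))\cdot(y-\bar z)|\le 2\lz\}$. Comparison with cones on that strip gives $|Du|\le |Du(\bar z)|+6\lz<\mu+9\lz$ throughout $W_{\bar x}\cap B(0,3)$, and since $W_{\bar x}$ must reach $\partial B(0,4)$, any arc in it from $\bar x\in B(0,r)$ to $\partial B(0,1)$ is the desired $\eta$. The constant $9$ comes from $|Du(\bar z)|<\mu+\lz$ plus the $6\lz$ from the strip estimate, with room to spare.
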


 \begin{proof}  The proof combines some argument  from
 \cite{s05,wy,fwz}.
  Without loss of generality, we may assume that $\mu \le 1-7\lz$.
 Indeed, by comparison property with cones in \cite{ceg}, (2.1)
implies that $|Du|\le 1+2\lz$ in $B(0,3)$. If $\mu>1-7\lz$, this implies that
$|Du|< \mu+9\lz$ in $B(0,3)$, and hence any line segment joining
$\partial B(0,r) $  and $\partial B(0,1)$ gives a desired continuum $\eta.$

 Below let  $r >0$ be sufficiently small such that $\inf_{x\in B(0,r )}|Du(x)|>\mu-\lz$.
 By the upper semicontinuity of $|Du|$, the set
  $U:= \{x\in B(0,4): |Du(x)|< \mu+\lz \}$ is   open and nonempty, and moreover,
  $|Du|\ge \mu+\lz$ on $\partial U\cap B(0,4)$, where we note that 
 $|Du(0)|=1> \mu+\lz$ implies that $U\subsetneq B(0,4)$.  
 By 
  $ \mu\le 1-7\lz$, there must be a connected component $U_0$ of $U$ such that
  $U_0\cap B(0,r/8)\ne\emptyset $.
  Denote by $U_1$   a connected component  of $U_0\cap B(0,r)$  satisfying  $U_1 \cap B(0,r/8)\ne\emptyset $.
    Then $u$ is not a linear function in $U_1$.
  Otherwise, $u=a\cdot x+b$  in $U_1$, and hence  in $\overline {U_1 }$, for some vector $a$ with $|a|<\mu+\lz$.
  Given any $x\in U_1\cap B(0,r/8)$, there exists a point $w\in \partial U_1\cap B(0,r/2)\subset \partial U\cap B(0,4)$ such that
 $|x-w|= \dist(x,\partial U_1)$.  Then $|Du(w)|\ge \mu+\lz$. On the  other hand,
  for any  unit vector $e\not\perp x-w$,  we can find a $h\ne0$ so that the line segment
  $(w,w+he)\subset B(x,\dist(x,\partial U_1))\subset U_1$, and hence, by $u=a\cdot x+b$   in $\overline {U_1 }$ one concludes that 
  $$Du(w)\cdot e=\lim_{\dz\to0+} \frac{u(w+\dz he)-u(w)}{\dz h}=a\cdot e.$$
 This gives that  $ Du(w) = a $  and  $|Du(w)|=|a|<\mu+\lz$, which
 is a contradiction.

Since  $u$ is not linear in the connected open set $U_1$,
  there exists a line segment $[\bar x ,\bar y]\subset U_1$, a point $\bar z\in (\bar x ,\bar y)$
and a linear function $l(x)=a_0\cdot x+b_0 $ in $\rr^2$
with $a_0=[u(\bar y)-u(\bar x )]\frac{\bar y-\bar x }{|\bar y-\bar x |^2}$
such that either
 \begin{equation}\label{xcase1}
u\ge l\quad {\rm on}\ [\bar x ,\bar y],\quad u(\bar x )>l(\bar x ),\quad u(\bar z)=l(\bar z),\quad
u(\bar y)>l(\bar y);
\end{equation}
or
 \begin{equation*}%\label{xcase2}
u\le l\quad {\rm on}\ [\bar x ,\bar y],\quad u(\bar x )<l(\bar x ),\quad u(\bar z)=l(\bar z),\quad
u(\bar y)<l(\bar y).
\end{equation*}
Up to considering $-u$,
  we may assume that \eqref{xcase1} holds.
Since $u-l$ reaches it minimal in $[\bar x ,\bar y]$ at $\bar z$, we have
  $$\left(a_0-Du(\bar z)\right) \cdot \left( z -\bar z \right)=0 \quad\forall z\in [\bar x ,\bar y],$$
 which, together with \eqref{xcase1}, yields  that
  $$\bar x ,\bar y\in W:=\{y\in B(0,4): u(y)>u(\bar z)+Du(\bar z) \cdot (y-\bar z)\}.$$
  Denote by $W_{\bar x}$ (resp. $W_{\bar y}$) the connected component  of $W$ which contains  $\bar x$ (resp. $\bar y$).
  Note  that $\bar z\in U_1\subset U\cap B(0,r)$ implies that $$\mu-\lz<|Du(\bar z)|<\mu+\lz.$$
%and hence
% $$|e_2-Du(\bar z)|\ge 1-|Du(\bar z)|\ge 1-\mu-\lz\ge   6\lz.$$
 The proof is then divided into 2 steps.

  \medskip

 \noindent {\it Step 1.}  Via planar topology, we prove   $W_{\bar x}\ne W_{\bar y}$   by contradiction.

 %$\bar x $ and $\bar y$ belong to two distinct connected components
%of  $W$.
 Suppose that $W_{\bar x}=W_{\bar y}$.
  %$\bar x $, $\bar y$ are in the same connected component of $W$.
Then there exists a simple curve $\gamma_0\subset W$ joining $\bar x $ to $\bar y$.
Let $\gz=\gamma_0\cup
 [\bar x ,\bar y]\subset B(0,4)$, which is a simple closed curve, and $V$ be the open set bounded by $\gz$ so that $\gz=\partial V$.
 Without loss of generality, we may assume that
there exists a small $\beta >0$ such that
 $$ B(\bar z,\beta)\cap
 \{y\in\rr^2:0<\measuredangle (y-\bar z,\bar y-\bar x )<\pi\}\subset V.$$
% or $$B^-(\bar z,\beta):=B(\bar z,\beta)\cap
% \{y\in\rr^2:-\pi<\angle (y-\bar z,\bar y-\bar x )<0\}\subset U$$
% Without loss of generality, we assume that $B^+(\bar z,\beta)\subset U$.
 Let $ \nu$ be a  unit  vector so  that $\nu \cdot (\bar y-\bar x )=0$
  and $\bar z+\frac12\bz\nu \in V$.
From  the compactness of $\gz_0\subset W$, it follows that
 $$\inf_{y\in\gz}[u(y)-u(\bar z)-Du(\bar z)\cdot (y-\bar z)]>0,$$
by which, there is  a small
$\ez_0>0$  such that
 $$u(y)\ge u(\bar z)+(Du(\bar z)+\ez_0 \nu)\cdot (y-\bar z),\quad
\, \forall y\in \gz_0.$$
Since $\nu\cdot(\bar x -\bar y)=0$, by (2.2) one also has that
$$u(y)\ge l(y)=u(\bar z)+(Du(\bar z)+\ez_0 \nu)\cdot (y-\bar z),\quad \, \forall y\in [\bar x ,\bar y].$$
The comparison principle in \cite{j1993} then gives
$$u(y)\ge u(\bar z)+(Du(\bar z)+\ez_0 \nu )\cdot (y-\bar z), \quad\, \forall y\in V $$
and hence
 $$Du(\bar z)\cdot \nu=\lim_{s\to0^+}\frac{u(\bar z+s\nu)- u(\bar z)}s \ge  Du(\bar z)\cdot  \nu +\ez_0,$$
which is contradiction.

\medskip
\noindent  {\it Step 2.}   Construct a desired continuum $\eta$.

 %Write $f=e_2-Du(\bar z)$ and recall that we already assume that $|f|\ge 4\lz $.
 By \eqref{flatass}, a direct calculation yields that
 $$ \{y\in\rr^2: (e_2-Du(\bar z))\cdot (y-\bar z)<-2\lz\}\cap B(0,4)
\subset B(0,4)\setminus W$$
and $$  \{y\in\rr^2: (e_2-Du(\bar z))\cdot (y-\bar z)> 2\lz\} \cap B(0,4)
\subset W.$$
By Step 1, $W$ contains at least two  distinct connected components  $W_{\bar x}$ and $W_{\bar y}$. Thus,
at least
one of  them (say $W_{\bar x}$)  is contained in  $$ \mathscr{S}:= \{y\in\rr^2: |(e_2-Du(\bar z))\cdot (y-\bar z)|\le
2\lz\}\cap B(0,4).$$
Note that $\overline {W_{\bar x}}\not\subset B(0,4)$. Indeed, otherwise,  we have
 $u(y)=u(\bar z)+ Du(\bar z)\cdot (y-\bar z)$
on $\partial W_{\bar x}$
 and hence in $W_{\bar x}$ by the comparison principle \cite{j1993},
which  contradicts to the definition of $W_{\bar x}$.

We claim that \begin{equation}\label{claim}
\mbox{$|Du(z)|\le  |Du(\bar z)|+6\lz$ for all $z\in W_{\bar x}\cap B(0,3)$}.\end{equation}
Assume this claim holds for the moment.
Since $|Du(\bar z)|\le\mu+ \lz$, this claim gives
$$\mbox{$|Du(z)|\le  \mu+7\lz$ for all $z\in W_{\bar x}\cap B(0,3)$}.$$
Then  any curve in $ W_{\bar x}  \cap B(0,3)
$ joining $\bar x\in B(0,r)$ and $\partial B(0,1)$ gives a desired continuum $\eta$.

Finally, we prove above claim \eqref{claim} as below.
It suffices to prove that
given any $z\in W_{\bar x}\cap B(0,3)$, one has
\begin{equation}\label{tt} u(y)\le  u( z)+ (|Du(\bar z)|+6\lz)|y-  z|\quad\mbox{for all $y\in \partial(W_{\bar x}\cap
B(0,4))$}.
\end{equation} Indeed, by
   the comparison property with cones in \cite{ceg}, \eqref{tt}  implies that 
  $$  u(y)\le  u( z)+ (|Du(\bar z)|+6\lz)|y-  z|\quad\mbox{for all $y\in  W_{\bar x}\cap
B(0,4) $},$$
which gives that $|Du(z)|\le Du(\bar z)+6\lz$ as desired.

To see \eqref{tt}, note that $\partial(W_{\bar x}\cap
B(0,4))\subset [\partial W_{\bar x}\cap B(0,4)]\cup [\overline{W_{\bar x}}\cap
 \partial {B(0,4)} ]$. Let $z\in W_{\bar x}\cap B(0,3)$. One always has
$$u(y)=u(\bar z)+ Du(\bar z)\cdot (y-\bar z)\le u(z)+Du(\bar z)\cdot (y-z)\le u(z)+|Du(\bar z)||y-z|  \quad \forall\, y\in \partial W_{\bar x}\cap B(0,4).$$
 On the other hand,
  for every $y\in \overline{W_{\bar x}}\cap
 \partial {B(0,4)} ,$   by \eqref{flatass} %and |(e_2-Du(\bar z))|\ge
 one   has
$$ u(y) \le   u(   z)+ e_2\cdot (y-   z) +2\lz
 \le u(  z)+ Du( \bar z)\cdot (y-   z) +
|(e_2-Du(\bar z))\cdot (y- z)|+ 2\lz.$$
Thanks to  $y,z\in\overline{W_{\bar x}}\subset \mathscr S$, this leads to 
$$|(e_2-Du(\bar z))\cdot (y- z)|\le  |(e_2-Du(\bar z))\cdot (y-\bar  z)|+ |(e_2-Du(\bar z))\cdot (\bar z- z)|\le 4\lz.$$
Using   $|y-z|\ge1$, we finally have
 \begin{align*} u(y)
\le u(  z)+ Du(\bar z)\cdot (y-   z)+6\lz
 \le u(  z)+ (|Du(\bar z)|+6\lz)|y-  z|
 \end{align*}
 %Therefore for any $z\in W_{\bar x}\cap B(0,3)$, we conclude that
%   $$  u(y)\le  u( z)+ (|Du(\bar z)|+6\lz)|y-  z|$$
%   for all  $ y\in \partial(W_{\bar x}\cap
%B(0,4))$  and hence,   by
%   the comparison property with cones in \cite{ceg},  for all $ y\in W_{\bar x}\cap
%B(0,4)$. This gives
%  $|Du(z)|\le   |Du(\bar z)|+6\lz $
as desired.
 \end{proof}

\begin{proof}[Proof of Proportion \ref{keyp}]
By the comparison property with  cones in \cite{ceg},
one has $|Du(x)|\le 1+2 \lz$ in $B(0,3)$ and hence $\mu\le 1+2 \lz$; see \cite{ceg}.
We claim that $\mu + 9\lz\ge 1$.
Note that if the claim is true, then by Proportion \ref{keyp1}, for $x$ close to $0$, we have
  $$(1-10 \lz )^2\le |Du(x)|^2\le u_{x_2}+C_0\lz^{1/2}\le |Du(x)|+C_0\lz^{1/2}\le 1+C \lz^{1/2}$$
  and hence,
  $$ \mbox{$ | u_{x_2}(x)-1|\le   C\lz^{1/2}  $
 and $|u_{x_1}(x)|\le |Du(x)|-u_{x_2}(x)\le 1+C\lz-(1-10\lz )^2\le C\lz$ }.$$
 These allow us to conclude  $|Du(x)-e_2 |\le C\lz^{1/2}$ as desired.

We prove the above claim  by contradiction.
Assume that  $\mu + 9\lz<1$.
For sufficiently small  $r>0$,   by Proposition \ref{keyp2},
  there exists  a continuum $\eta\subset  \overline {B(0,1)\setminus B(0,r)} $
  joining $\partial B(0,r)$ and $\partial B(0,1 )$ so that
   $|Du  |<  \mu+9\lz$  in $\eta$.
Recall also that  Crandall \cite{c08} built up a Lipschitz  curve $\xi\subset \overline {B(0,1)}$   joining $0$ and
 $ \partial B(0,1)$  so that
 $    |Du |\ge|Du(0)|=1$ in $\xi$.
 Since $\dist(\xi,\eta)<r$ and $\min\{\diam\xi,\diam\eta\}\ge1/2$,  one has
 $$C_1\ln\frac1r\le {\rm
 Cap}\,(\xi,\eta,B(0,2)):=\inf\{\|\nabla w\|^2_{L^2(B(0,2))}: w\in C^0(B(0,2)), w\ge 1\ \mbox{in $\xi$}, w\le0\ \mbox{in $\eta$}\};$$
 see e.g.\ \cite{hk}.
By Koch-Zhang-Zhou \cite{kzz},     $|Du|\in W^{1,2}_\loc(B(0,4))$ and
    $$\int_{ B(0,2)}|D|Du| |^2\,dx\le C\bint_{B(0, 3)}|Du|^2\,dx \le C_2. $$
Write
 $$w=\frac1{1-\mu-9\lz}[|\nabla u|-( \mu+9\lz)].$$
    Up to a continuous approximation, one has
  $$C_1\ln \frac1r\le\int_{ B(0,2)}|Dw |^2\,dx=  \frac1{(1-\mu-9\lz)^2}\int_{B(0, 2)}|D|Du||^2\,dx \le \frac{C_2}{(1-\mu-9\lz)^2}.$$
When $r\to0$, one has
  $$(1-\mu-9\lz)^2\le C\left(\ln\frac1r\right)^{-1}\to0 ,$$
  that is, $ \mu+9\lz=1$,   which is a contradiction.
\end{proof}

%\bigskip
% \noindent {\bf Acknowledgment.}  Y. Zhang have been
% partially supported by the Hausdorff Center for Mathematics.  Y. Zhou
% would like to thank the supports of von Humboldt Foundation, and
% National Natural Science of Foundation of China (No. 11522102\&11871088).
%

\end{document}